\newcommand{\beq}{\begin{eqnarray}}
\newcommand{\eeq}{\end{eqnarray}}
\newcommand{\bq}{\begin{equation}}
\newcommand{\eq}{\end{equation}}
\newcommand{\beqn}{\begin{eqnarray*}}
\newcommand{\eeqn}{\end{eqnarray*}}
\newcommand{\vertiii}[1]{{\vert\kern-0.25ex\vert\kern-0.25ex\vert #1
    \vert\kern-0.25ex\vert\kern-0.25ex\vert}}
\newcommand{\ignore}[1]{}
\newtheorem{definition}{Definition}[section]
\newtheorem{proposition}{Proposition}[section]
\newtheorem{theorem}{Theorem}[section]
\newtheorem{remark}{Remark}[section]
\newtheorem{lemma}{Lemma}[section]
\newtheorem{corollary}{Corollary}[section]
\newtheorem{notation}{Notation}[section]
\numberwithin{equation}{section}
\title[On the LRD of time-changed mfBm model]{On the Long range Dependence of time-changed mixed fractional Brownian motion model}
\author[S. Alajmi, E. Mliki]{Shaykhah  Alajmi$^{1}$ and Ezzedine Mliki$^{1, 2}$}
\address{$^{1}$Department of Mathematics, College of Science, Imam Abdulrahman Bin Faisal University, P. O. Box 1982, Dammam, Saudi Arabia.
$^{2}$Basic and Applied Scientific Research Center, Imam Abdulrahman Bin Faisal University, P.O. Box 1982, Dammam, 31441, Saudi Arabia. }
\email{\sl 2190500234@iau.edu.sa}
\email{\sl ermliki@iau.edu.sa}
\begin{document}
\begin{abstract} A  time-changed mixed fractional Brownian motion  is an iterated process  constructed as the superposition of mixed fractional Brownian motion and other process.   In this paper we consider mixed fractional Brownian motion of parameters $a, b$ and $H\in (0, \, 1)$ time-changed by two processes,  gamma and tempered stable subordinators. We present their main properties paying main attention to the long range dependence. We deduce that the fractional Brownian motion time-changed by gamma and tempered stable subordinators has long range dependence  property for all $H\in(0,1)$.
\end{abstract}


\subjclass[2010] {60G20, 60G18, 60G15, 60G10 }
\keywords{ Mixed fractional Brownian motion, long-range dependence, subordination, Tempered stable subordinator, Gamma subordinator }



\maketitle

\section{Introduction}

The fractional  Brownian motion (fBm)  $B^{H}=\{B_{t}^{H},\, t\geq 0 \}$ with parameter $H$,  is a centered Gaussian process with covariance function
\begin{eqnarray}
	\mbox{Cov}(B_{t}^{H},\,B_{s}^{H})= \frac{1}{2} [t^{2H}+s^{2H}- |t-s|^{2H}],\quad s,t\geq 0,
\end{eqnarray}
where $H$ is a real number in $(0, 1),$ called the Hurst index or Hurst exponent.  The case $H=\frac{1}{2}$ corresponds to the Brownian motion (Bm).

  An extension of the  fBm was introduced by Cheridito \cite{Cher}, called the  mixed fractional Brownian motion (mfBm for short) which is a linear combination between a Brownian motion and an independent fractional Brownian motion of Hurst exponent $H$, with stationary increments exhibit a long-range dependent for $H>\frac{1}{2}.$ A mfBm of parameters $a,  b $ and $  H $ is a process  $N^H(a,b)=\{N^{H}_{t}(a,b), \; t\geq 0\}$, defined on the probability space  $(\Omega, \mathcal{F}, P)$ by
	\begin{eqnarray*}
		N_t^H(a,b)=aB_t+bB_t^H,
	\end{eqnarray*}
	where  $B=\left\lbrace B_t, t\geq0\right\rbrace $ is a Brownian motion and  $B^H=\left\lbrace B_t^H, t\geq0\right\rbrace $ is an independent fractional Brownian motion of Hurst exponent $H\in(0,1)$. We refer also to \cite{AlMl, Cher, ElNo, FFDD,MMEM, Thale} for further information on mfBm process.

The  time-changed mixed fractional Brownian motion is defined as 	 \begin{eqnarray*}Y^{H}_{\beta}(a,b)=\{Y^{H}_{\beta_{t}}(a,b), \; t\geq 0\} =\{N^{H}_{\beta_{t}}(a,b), \; t\geq 0\}	\end{eqnarray*},  where the parent process $N^{H}(a,b)$ is a mfBm with parameters $a, b,$ $H\in (0, \, 1)$ and  the subordinator $\beta=\{ \beta_{t},\, t\geq 0\}$ is assumed to be independent of both the  Brownian motion and the fractional Brownian motion. If $H=\frac{1}{2}$, the process $Y^{\frac{1}{2}}_{\beta}(0,1)$ is called subordinated Brownian motion, it was investigated in \cite{Mag, Nan}. Also, the process $Y^{H}_{\beta}(0,1)$ is called subordinated fractional Brownian motion  it was investigated in  \cite{KGW, KWPS}.

Time-changed process is constructed by taking superposition of tow independent stochastic systems. The evolution of time in external process is replaced by a non-decreasing stochastic process, called subordinator.  The resulting time-changed process very often retain important properties of the external process, however certain characteristics might change. This idea of subordination was introduced by Bochner \cite{Bochner} and  was  explored in many papers (e.g. \cite{Al,  HmMl, MeHmMl,  KGW, MejMl, MeMl, Omer, Sch, TWS}).

The  time-changed mixed fractional Brownian motion has been discussed in \cite{GZH} to present a stochastic model of the discounted stock price in some arbitrage-free and complete financial markets. This model is the process
\begin{eqnarray*}
X_{t}^{H}(a,b)=X_{0}^{H}(a,b)\exp\{\mu \beta_{t}+\sigma N^{H}_{\beta_{t}}(a,b)\},
\end{eqnarray*}
where $\mu$ is the rate of the return and $\sigma$ is the volatility and $\beta_{t}$ is the $\alpha$-inverse stable subordinator.

The time-changed processes have found many interesting applications, for example in finance  \cite{AlexY, GZH, HJY, Omer, FSH}, in statistical inference \cite{AlexYES} and in physics  \cite{GSPE}.

Our goal in this parer is to study the main properties of  the time-changed mixed fractional Brownian motion model paying attention to the long range dependence property. In the first case the internal process, which plays role of time, is the tempered stable subordinator while  in the second case the internal process is the  gamma subordinator.


\section{MfBm  time-changed by Tempered Stable Subordinator }
\begin{definition}
	Tempered Stable Subordinator  with index $\alpha\in(0, 1)$ and tempering parameter $\lambda>0$ (TSS)  is the non-decreasing and non-negative L\'{e}vy process  $S^{\lambda,\alpha}=\{S^{\lambda,\alpha}_{t}, \; t\geq 0\} $ with density function:
	\begin{eqnarray*}
		f_{\lambda,\alpha}(x,t)=exp(-\lambda x+\lambda^\alpha t)f_{\alpha}(x,t), \ \ \lambda>0, \ \ \alpha\in(0,1),
	\end{eqnarray*}
	where
	\begin{eqnarray*}
		f_{\alpha}(x,t)=\frac{1}{\pi}\int_0^\infty e^{-xy}e^{-ty^\alpha \cos\alpha\pi}\sin(ty^\alpha \sin\alpha\pi)dy .
	\end{eqnarray*}	
More detail about TSS can be founded in \cite{KGW}.
\end{definition}
\begin{lemma}(see \cite{KGW} for the proof)\label{ll1}\\
For $q>0,$ the asymptotic behavior of q-th order moments of $S^{\lambda,\alpha}_{t}$ satisfies
\begin{eqnarray*}
E(S^{\lambda,\alpha}_{t})^{q}\sim (\alpha \lambda^{\alpha-1}t)^{q}, \quad as \; t\rightarrow \infty.
\end{eqnarray*}
\end{lemma}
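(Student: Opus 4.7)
The plan is to exploit the Lévy structure of the tempered stable subordinator. Substituting the density $f_{\lambda,\alpha}(x,t)$ from the definition and using the classical Laplace transform of the one-sided $\alpha$-stable density (which absorbs the factor $e^{\lambda^{\alpha}t}$), one obtains
\[
E\bigl[e^{-u S^{\lambda,\alpha}_{t}}\bigr]=\exp\!\bigl(-t\,\psi(u)\bigr),\qquad \psi(u)=(u+\lambda)^{\alpha}-\lambda^{\alpha}.
\]
Differentiating once at $u=0$ gives $E[S^{\lambda,\alpha}_{t}]=t\,\psi'(0)=\alpha\lambda^{\alpha-1}t$, which already matches the claimed leading order of the first moment and fixes the target constant $c:=\alpha\lambda^{\alpha-1}$.

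For integer orders $q=n\in\mathbb{N}$ I would iterate the differentiation. By Faà di Bruno applied to $e^{-t\psi(u)}$, the $n$-th derivative at $u=0$ is a polynomial in $t$ of degree exactly $n$ whose top coefficient equals $(\psi'(0))^{n}=c^{n}$; every lower-order monomial in $t$ contains at least one factor $\psi^{(k)}(0)$ with $k\ge 2$ and so carries a strictly smaller power of $t$. Dividing by $t^{n}$ and letting $t\to\infty$ yields $E\bigl[(S^{\lambda,\alpha}_{t})^{n}\bigr]\sim(c\,t)^{n}$.

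For arbitrary $q>0$ I would couple a strong law with a uniform-integrability argument. Because $\psi'(0)<\infty$, the strong law of large numbers for Lévy processes with finite mean gives $S^{\lambda,\alpha}_{t}/t\to c$ almost surely as $t\to\infty$. Pick any integer $n>q$; by the integer case, $\sup_{t\ge 1}E\bigl[(S^{\lambda,\alpha}_{t}/t)^{n}\bigr]<\infty$, so the family $\{(S^{\lambda,\alpha}_{t}/t)^{q}\}_{t\ge 1}$ is bounded in $L^{n/q}$ and is therefore uniformly integrable. Vitali's convergence theorem then promotes the almost-sure limit to an $L^{1}$ limit, giving
\[
\frac{E\bigl[(S^{\lambda,\alpha}_{t})^{q}\bigr]}{t^{q}}=E\!\left[\Bigl(\frac{S^{\lambda,\alpha}_{t}}{t}\Bigr)^{\!q}\right]\longrightarrow c^{q}=(\alpha\lambda^{\alpha-1})^{q},
\]
which is the asserted asymptotic equivalence.

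The main obstacle is the non-integer regime, since fractional moments cannot be read off directly from $\psi$. The LLN+UI detour finesses this by reducing to the integer case, at the cost of delivering only the leading order; if one wanted subleading corrections one would instead work with a Mellin representation of $x\mapsto x^{q}$ and expand $e^{-t\psi(u)}$ around $u=0$ by Laplace's method, which is technically heavier but more informative.
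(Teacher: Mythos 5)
Your argument is correct, and it is worth noting that the paper itself gives no proof of this lemma at all --- it simply cites \cite{KGW} --- so any comparison is really with the argument in that reference rather than with anything in the present text. Your route is self-contained and sound: the Laplace exponent $\psi(u)=(u+\lambda)^{\alpha}-\lambda^{\alpha}$ follows directly from the stated density (the factor $e^{\lambda^{\alpha}t}$ indeed cancels against the stable Laplace transform $e^{-t(u+\lambda)^{\alpha}}$), the tempering makes $\psi$ smooth at $0$ so all integer moments are finite polynomials in $t$ with top coefficient $(\psi'(0))^{n}$ by Fa\`a di Bruno, and the passage to fractional $q$ via the strong law $S^{\lambda,\alpha}_{t}/t\to\alpha\lambda^{\alpha-1}$ a.s.\ plus $L^{n/q}$-boundedness (hence uniform integrability) and Vitali is a standard and airtight way to handle non-integer moments. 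This is arguably cleaner than what one typically finds in the cited literature, where the asymptotics are extracted from explicit moment formulas or expansions of the moment generating function; your LLN-plus-UI device buys generality in $q$ with minimal computation, at the acknowledged cost of yielding only the leading term. The one cosmetic caveat is that the conclusion as you state it is the limit $E[(S^{\lambda,\alpha}_{t})^{q}]/t^{q}\to(\alpha\lambda^{\alpha-1})^{q}$, which is exactly the asserted asymptotic equivalence since the limit is a nonzero constant, so nothing is missing.
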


\begin{definition}
	Let $ N^H(a, b)=\{N^{H}_{t}(a,b), \; t\geq 0\}$  be a mfBm and let  $S^{\lambda,\alpha}=\{S^{\lambda,\alpha}_{t}, \; t\geq 0\}$ be a TSS with index $\alpha\in(0, 1)$ and tempering parameter $\lambda>0$. The time-changed process of $N^H(a, b)$ by means of  $S^{\lambda,\alpha}$ is the process  $Y_{S^{\lambda,\alpha}}^{H}(a, b)=\{Y^{H}_{S^{\lambda,\alpha}_t}, \; t\geq 0\}$ defined by:
	\begin{eqnarray}
		 Y^{H}_{S^{\lambda,\alpha}_t}=N^{H}_{S^{\lambda,\alpha}_{t}}(a,b)=aB_{S^{\lambda,\alpha}_{t}}+bB^H_{S^{\lambda,\alpha}_{t}}, \quad (a,b) \in \mathbb{R}\times \mathbb{R}\backslash\{0\},
	\end{eqnarray}
	where the subordinator $S^{\lambda,\alpha}_{t}$ is assumed to be independent of both the Bm and the fBm.
\end{definition}
	\begin{notation}
Let $X$ and $Y$ be two random variables defined on the same probability space $(\Omega, \mathcal{F}, P).$ We denote the correlation coefficient $Corr(X, Y)$  by
	\begin{eqnarray}\label{qqq1}
Corr(X, Y)=\frac{Cov(X, Y)}{\sqrt{Var(X) Var(Y)}}.
\end{eqnarray}	
	\end{notation}
	
\begin{proposition}\label{pr11}
	Let   $Y_{S^{\lambda,\alpha}}^{H}(a, b)$ be the mfBm time-changed  by  $S^{\lambda,\alpha}$. Then by  Taylor's expansion we get, for fixed $s$ and large $t$,
		\begin{eqnarray}
		Cov(	Y^{H}_{S^{\lambda,\alpha}_t},	Y^{H}_{S^{\lambda,\alpha}_s})
		&\sim&
		\frac{1}{2}a^2s(\alpha\lambda^{\alpha-1})+ b^2Hs(\alpha\lambda^{\alpha-1})^{2H}t^{2H-1}, \ \ as \ \ t \rightarrow \infty.\label{eq11}	
		\end{eqnarray}
	\end{proposition}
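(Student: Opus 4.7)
The plan is to reduce the covariance to the two independent components of the mfBm, handle each by conditioning on the subordinator, and close with Lemma \ref{ll1} together with one Taylor expansion. Since $B$, $B^H$ and $S^{\lambda,\alpha}$ are mutually independent and $B,B^H$ are centred, all cross terms vanish and
\[
Cov(Y^H_{S^{\lambda,\alpha}_t},Y^H_{S^{\lambda,\alpha}_s})
=a^2\,Cov(B_{S^{\lambda,\alpha}_t},B_{S^{\lambda,\alpha}_s})
+b^2\,Cov(B^H_{S^{\lambda,\alpha}_t},B^H_{S^{\lambda,\alpha}_s}).
\]
For the Brownian piece, conditioning on $S^{\lambda,\alpha}$ and using $Cov(B_u,B_v)=\min(u,v)$ together with the monotonicity of a subordinator (so that $\min(S^{\lambda,\alpha}_t,S^{\lambda,\alpha}_s)=S^{\lambda,\alpha}_s$ for $s\le t$) collapses this to $E[S^{\lambda,\alpha}_s]=s\alpha\lambda^{\alpha-1}$, producing the first summand of \eqref{eq11}.

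For the fractional piece, the same conditioning yields
\[
Cov(B^H_{S^{\lambda,\alpha}_t},B^H_{S^{\lambda,\alpha}_s})
=\tfrac12\Bigl(E[(S^{\lambda,\alpha}_t)^{2H}]+E[(S^{\lambda,\alpha}_s)^{2H}]-E[(S^{\lambda,\alpha}_t-S^{\lambda,\alpha}_s)^{2H}]\Bigr),
\]
where the absolute value drops because the subordinator is nondecreasing. Stationary increments of the L\'evy process $S^{\lambda,\alpha}$ then let me replace $S^{\lambda,\alpha}_t-S^{\lambda,\alpha}_s$ by $S^{\lambda,\alpha}_{t-s}$ in law, so Lemma \ref{ll1} with $q=2H$ applied to both $S^{\lambda,\alpha}_t$ and $S^{\lambda,\alpha}_{t-s}$ yields
\[
E[(S^{\lambda,\alpha}_t)^{2H}]-E[(S^{\lambda,\alpha}_{t-s})^{2H}]
\sim (\alpha\lambda^{\alpha-1})^{2H}\bigl(t^{2H}-(t-s)^{2H}\bigr),
\]
while $E[(S^{\lambda,\alpha}_s)^{2H}]$ remains a finite constant and is absorbed as a lower-order term. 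A first-order Taylor expansion of $(1-s/t)^{2H}$ around $0$ then delivers $t^{2H}-(t-s)^{2H}\sim 2Hs\,t^{2H-1}$, and multiplying by $b^2/2$ produces the second summand.

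The principal obstacle is the subtraction of the two asymptotics coming from Lemma \ref{ll1}: the leading $(\alpha\lambda^{\alpha-1})^{2H}t^{2H}$ cancels, so for the statement to be meaningful at the scale $t^{2H-1}$ one needs the remainder in Lemma \ref{ll1} to be genuinely negligible compared to $t^{2H-1}$ uniformly in the shift $t\mapsto t-s$; once that is granted, the Taylor expansion step is routine. The regime $H>1/2$ is the interesting one for the long-range dependence application that motivates \eqref{eq11}, and there the second term dominates the bounded constant $\tfrac12 b^2 E[(S^{\lambda,\alpha}_s)^{2H}]$ that I have quietly discarded.
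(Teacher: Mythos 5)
Your argument is correct and rests on the same essential ingredients as the paper's proof --- independence of $B$ and $B^H$ to kill the cross terms, stationarity of increments to pass from $(t,s)$ to $t-s$, Lemma \ref{ll1} for the moment asymptotics, and a first-order Taylor expansion of $(t-s)^{2H}$ --- but it is packaged differently: the paper applies the polarization identity $Cov(X,Y)=\tfrac12E[X^2+Y^2-(X-Y)^2]$ to the full time-changed mfBm and uses stationarity of the increments of $N^H_{S^{\lambda,\alpha}}$, whereas you split into the two independent components first and condition on the subordinator, using $Cov(B_u,B_v)=\min(u,v)$ and the fBm covariance kernel. Your route buys two things. First, the Brownian part comes out exactly rather than asymptotically: $a^2E[\min(S^{\lambda,\alpha}_t,S^{\lambda,\alpha}_s)]=a^2E[S^{\lambda,\alpha}_s]=a^2s\,\alpha\lambda^{\alpha-1}$. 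Note, however, that this is \emph{twice} the first summand of \eqref{eq11}, so your assertion that it ``produces the first summand'' is not literally true; the discrepancy lies in the proposition, not in your computation. The paper's own intermediate line $\frac{a^2}{2}\bigl[E(S^{\lambda,\alpha}_t)+E(S^{\lambda,\alpha}_s)-E(S^{\lambda,\alpha}_{t-s})\bigr]$ also equals $a^2\alpha\lambda^{\alpha-1}s$, and the analogous constant in the gamma case (Proposition \ref{pr1}) is correctly reported as $a^2s/\nu$ without the $\tfrac12$; the $\tfrac12$ in \eqref{eq11} is a slip that your method exposes, harmless for the long-range-dependence conclusion since that uses only the $t^{2H-1}$ term. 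Second, you explicitly flag the one genuinely delicate step that the paper passes over in silence: after the leading $(\alpha\lambda^{\alpha-1}t)^{2H}$ terms cancel, concluding at scale $t^{2H-1}$ requires the error in Lemma \ref{ll1} to be $o(t^{2H-1})$, and the discarded constant $\tfrac{b^2}{2}E[(S^{\lambda,\alpha}_s)^{2H}]$ is negligible only when $H>\tfrac12$. Both caveats apply equally to the paper's own proof.
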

	\begin{proof}
 For fixed $s$ and using  (\cite{KGW}, pp 195), the process $	Y^{H}_{S^{\lambda,\alpha}_t}$ follows
		\begin{eqnarray*}
			Cov(	Y^{H}_{S^{\lambda,\alpha}_t},	Y^{H}_{S^{\lambda,\alpha}_s})
			&=&
			\frac{1}{2}E\left[(	Y^{H}_{S^{\lambda,\alpha}_t})^2+(	Y^{H}_{S^{\lambda,\alpha}_s})^2-(	 Y^{H}_{S^{\lambda,\alpha}_t}-	 Y^{H}_{S^{\lambda,\alpha}_s})^2 \right]\nonumber\\ &=&\frac{1}{2}E\left[(N^{H}_{S^{\lambda,\alpha}_{t}}(a,b))^2+(N^{H}_{S^{\lambda,\alpha}_{s}}(a,b))^2-(N^{H}_{S^{\lambda,\alpha}_{t}}(a,b)-N^{H}_{S^{\lambda,\alpha}_{s}}(a,b))^2\right ] \nonumber\\
&=&\frac{1}{2}E\left[(aB_{S^{\lambda,\alpha}_{t}}+bB^H_{S^{\lambda,\alpha}_{t}}) ^2+( aB_{S^{\lambda,\alpha}_{s}}+bB^H_{S^{\lambda,\alpha}_{s}}) ^2\right ] \nonumber\\
&-&\frac{1}{2}E\left[\left(a(B_{S^{\lambda,\alpha}_{t}}-
B_{S^{\lambda,\alpha}_{s}})+b(B^H_{S^{\lambda,\alpha}_{t}}-B^H_{S^{\lambda,\alpha}_{s}})\right)^2\right ]. \nonumber\\
\end{eqnarray*}
Since $B^H$ has stationary increments, then
\begin{eqnarray}\label{qqq11}
	Cov(	Y^{H}_{S^{\lambda,\alpha}_t},	Y^{H}_{S^{\lambda,\alpha}_s})	&=&\frac{1}{2}E \left[ (aB_{S^{\lambda,\alpha}_{t}}+bB^H_{S^{\lambda,\alpha}_{t}}) ^2
			+( aB_{S^{\lambda,\alpha}_{s}}+bB^H_{S^{\lambda,\alpha}_{s}}) ^2-( aB_{S^{\lambda,\alpha}_{t-s}}+bB^H_{S^{\lambda,\alpha}_{t-s}})^2\right]
			 \nonumber\\
			&=&\frac{1}{2}E\left[(aB_{S^{\lambda,\alpha}_{t}})^2+( bB^H_{S^{\lambda,\alpha}_{t}})^2+2abB_{S^{\lambda,\alpha}_{t}} B^H_{S^{\lambda,\alpha}_{t}} \right]\nonumber\\
			&+&\frac{1}{2}E\left[(aB_{S^{\lambda,\alpha}_{s}})^2+( bB^H_{S^{\lambda,\alpha}_{s}})^2+2abB_{S^{\lambda,\alpha}_{s}} B^H_{S^{\lambda,\alpha}_{s}} \right]\nonumber\\
			&-&\frac{1}{2}E\left[(aB_{S^{\lambda,\alpha}_{t-s}})^2+( bB^H_{S^{\lambda,\alpha}_{t-s}})^2
			+2abB_{S^{\lambda,\alpha}_{t-s}} B^H_{S^{\lambda,\alpha}_{t-s}} \right].
			\end{eqnarray}
By the  independence of $B_{t}$ and $B_{t}^{H}$, we get
	\begin{eqnarray*}
		Cov(	Y^{H}_{S^{\lambda,\alpha}_t},	Y^{H}_{S^{\lambda,\alpha}_s})	 &=&\frac{a^2}{2}\left[E(B_{S^{\lambda,\alpha}_{t}})^2+E(B_{S^{\lambda,\alpha}_{s}})^2-E(B_{S^{\lambda,\alpha}_{t-s}})^2 \right]\nonumber\\
			 &+&\frac{b^2}{2}\left[E(B^H_{S^{\lambda,\alpha}_{t}})^2+E(B^H_{S^{\lambda,\alpha}_{s}})^2-E(B^H_{S^{\lambda,\alpha}_{t-s}})^2 \right]\nonumber\\
			&=&\frac{a^2}{2}E(B^\frac{1}{2}(1))^2\left[ E( S^{\lambda,\alpha}_{t}) +E( S^{\lambda,\alpha}_{s})-E( S^{\lambda,\alpha}_{t-s}) \right]\nonumber\\ &+&\frac{b^2}{2}E(B^H(1))^2\left[ E(S^{\lambda,\alpha}_{t}) ^{2H}+E( S^{\lambda,\alpha}_{s})^{2H}-E(S^{\lambda,\alpha}_{t-s})^{2H}\right].	
			\end{eqnarray*}
Hence for large $t$  and using Lemma \ref{ll1}, we have
	\begin{eqnarray*}
		Cov(	Y^{H}_{S^{\lambda,\alpha}_t},	Y^{H}_{S^{\lambda,\alpha}_s})	&\sim& \frac{a^2}{2}\left[ (\alpha\lambda^{\alpha-1}) t+E( S^{\lambda,\alpha}_{s})-(\alpha\lambda^{\alpha-1}) {(t-s)}\right]\nonumber \\
			&+&\frac{b^2}{2}\left[ (\alpha\lambda^{\alpha-1})^{2H} t^{2H}+E( S^{\lambda,\alpha}_{s})^{2H}-(\alpha\lambda^{\alpha-1})^{2H} {(t-s)}^{2H}\right]\nonumber\\		
			&=& \frac{a^2}{2} (\alpha\lambda^{\alpha-1}) t\left(\frac{s}{t}+E(S^{\lambda,\alpha}_{s})t^{-1}+O(t^{-2}) \right)\nonumber\\
			&+&\frac{b^2}{2}(\alpha\lambda^{\alpha-1})^{2H} t^{2H}\left(2H\frac{s}{t}+E( S^{\lambda,\alpha}_{s})^{2H}t^{-2H}+O(t^{-2}) \right)\nonumber \\
			&\sim&  \frac{a^2}{2} (\alpha\lambda^{\alpha-1}) s+ b^2Hs(\alpha\lambda^{\alpha-1})^{2H}t^{2H-1}.
		\end{eqnarray*}
	\end{proof}


\begin{proposition}\label{p123}
Let $ N^H(a, b)=\{N^{H}_{t}(a,b), \; t\geq 0\}$  be the mfBm of parameters $a,  b $ and $  H$. Let   $S^{\lambda,\alpha}=\{S^{\lambda,\alpha}_{t},\; t\geq 0\}$ be the TSS with index $\alpha\in(0, 1)$ and tempering parameter $\lambda>0$
and let   $Y_{S^{\lambda,\alpha}}^{H}(a, b)$ be the mfBm time-changed process  by means of $S^{\lambda,\alpha}.$ Then for fixed $s>0$ and $t \rightarrow \infty$, we get
 \begin{eqnarray*}
		E[(	Y^{H}_{S^{\lambda,\alpha}_t}-	Y^{H}_{S^{\lambda,\alpha}_s})^{2}]&\sim&	 \frac{1}{2}a^2t(\alpha\lambda^{\alpha-1})+ b^2H(\alpha\lambda^{\alpha-1})^{2H}t^{2H}-
		a^2s(\alpha\lambda^{\alpha-1})- 2b^2Hs(\alpha\lambda^{\alpha-1})^{2H}t^{2H-1}\\&+&\frac{1}{2}a^2s(\alpha\lambda^{\alpha-1})+ b^2H(\alpha\lambda^{\alpha-1})^{2H}s^{2H}.
	\end{eqnarray*}
	\end{proposition}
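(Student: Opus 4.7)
The plan is to expand the squared difference into a three-term combination of second moments and a covariance, then invoke Lemma \ref{ll1} and Proposition \ref{pr11}. Because both $B$ and $B^{H}$ are centered Gaussian and the subordinator $S^{\lambda,\alpha}$ is independent of both, the time-changed process $Y^{H}_{S^{\lambda,\alpha}_u}$ is itself mean-zero for every $u\ge 0$, so I would write
\begin{equation*}
E\bigl[(Y^{H}_{S^{\lambda,\alpha}_t}-Y^{H}_{S^{\lambda,\alpha}_s})^{2}\bigr]
= E\bigl[(Y^{H}_{S^{\lambda,\alpha}_t})^{2}\bigr]+E\bigl[(Y^{H}_{S^{\lambda,\alpha}_s})^{2}\bigr]-2\,\mathrm{Cov}\bigl(Y^{H}_{S^{\lambda,\alpha}_t},Y^{H}_{S^{\lambda,\alpha}_s}\bigr).
\end{equation*}

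For the two second-moment contributions I would condition on $S^{\lambda,\alpha}_u$: the independence of $B$ and $B^{H}$ kills the cross term, so that
\begin{equation*}
E\bigl[(Y^{H}_{S^{\lambda,\alpha}_u})^{2}\bigr]= a^{2}\,E[S^{\lambda,\alpha}_u]+b^{2}\,E\bigl[(S^{\lambda,\alpha}_u)^{2H}\bigr],\qquad u\in\{s,t\}.
\end{equation*}
To these I would apply the asymptotic moment estimates of Lemma \ref{ll1} (with $q=1$ and $q=2H$), following the same large-$t$ Taylor expansion used in the proof of Proposition \ref{pr11}. The covariance term is then supplied directly by Proposition \ref{pr11}, which has already been proved in the form needed here.

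The final step is algebraic: substitute the three asymptotics into the decomposition above and collect terms. The main obstacle is purely bookkeeping. Specifically, one has to expand $(t-s)^{2H}\sim t^{2H}-2Hst^{2H-1}$ so that the factor $2H$ attached to the $b^{2}$ piece of the covariance appears with the correct sign and multiplicity, keep track of the $\tfrac12$ prefactor coming from the polarization identity applied to each variance, and verify that the $t$-independent summands $\tfrac12 a^{2}s(\alpha\lambda^{\alpha-1})$ and $b^{2}H(\alpha\lambda^{\alpha-1})^{2H}s^{2H}$ come from the $E[(Y^{H}_{S^{\lambda,\alpha}_s})^{2}]$ contribution and survive in the stated expansion. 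All $O(t^{-2})$ corrections produced by the Taylor expansion can be absorbed into the $\sim$ relation, yielding exactly the asymptotic formula of the proposition.
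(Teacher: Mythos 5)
Your decomposition $E[(Y^{H}_{S^{\lambda,\alpha}_t}-Y^{H}_{S^{\lambda,\alpha}_s})^{2}]=E[(Y^{H}_{S^{\lambda,\alpha}_t})^{2}]+E[(Y^{H}_{S^{\lambda,\alpha}_s})^{2}]-2\,\mathrm{Cov}(Y^{H}_{S^{\lambda,\alpha}_t},Y^{H}_{S^{\lambda,\alpha}_s})$ and your use of Proposition \ref{pr11} for the cross term follow the paper's route exactly. The genuine problem is that your (correct) evaluation of the second moments does not produce the coefficients in the statement you are asked to prove. Conditioning on the subordinator gives, as you say, $E[(Y^{H}_{S^{\lambda,\alpha}_t})^{2}]=a^{2}E[S^{\lambda,\alpha}_t]+b^{2}E[(S^{\lambda,\alpha}_t)^{2H}]\sim a^{2}(\alpha\lambda^{\alpha-1})t+b^{2}(\alpha\lambda^{\alpha-1})^{2H}t^{2H}$ by Lemma \ref{ll1} with $q=1$ and $q=2H$; but the proposition's first two terms are $\tfrac{1}{2}a^{2}(\alpha\lambda^{\alpha-1})t+b^{2}H(\alpha\lambda^{\alpha-1})^{2H}t^{2H}$, i.e.\ they carry an extra $\tfrac12$ on the Brownian part and an extra $H$ on the fractional part. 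The paper manufactures those coefficients by formally substituting $s=t$ into the asymptotic \eqref{eq11}, which was derived only for \emph{fixed} $s$ and $t\to\infty$; your own computation shows this substitution is illegitimate (the exact identity in the proof of Proposition \ref{pr11} gives $\mathrm{Cov}(Y^{H}_{S^{\lambda,\alpha}_t},Y^{H}_{S^{\lambda,\alpha}_t})=a^{2}E[S^{\lambda,\alpha}_t]+b^{2}E[(S^{\lambda,\alpha}_t)^{2H}]$, consistent with your formula and not with the stated one). So if you carry out your plan honestly you will land on a different asymptotic than the one in the proposition, and you cannot close that gap by bookkeeping alone.

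A smaller but related issue: you assert that the $t$-free summands $\tfrac{1}{2}a^{2}s(\alpha\lambda^{\alpha-1})$ and $b^{2}H(\alpha\lambda^{\alpha-1})^{2H}s^{2H}$ arise from $E[(Y^{H}_{S^{\lambda,\alpha}_s})^{2}]$ via Lemma \ref{ll1}. Since $s$ is fixed, the lemma (an asymptotic as the time argument tends to infinity) says nothing about $E[S^{\lambda,\alpha}_s]$ or $E[(S^{\lambda,\alpha}_s)^{2H}]$; that contribution is just an $s$-dependent constant and should be carried along as such, as the paper does with $E[S^{\lambda,\alpha}_s]$ in the proof of Proposition \ref{pr11}. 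Likewise, there is no ``$\tfrac12$ prefactor coming from the polarization identity'' in the three-term decomposition you wrote down; the halves in the target formula have the different (and problematic) origin described above.
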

	\begin{proof} Let $s>0$ be fixed and $t \rightarrow \infty.$ Then by using Eq. \eqref{eq11}, we have
 \begin{eqnarray*}
		E[(	Y^{H}_{S^{\lambda,\alpha}_t}-	Y^{H}_{S^{\lambda,\alpha}_s})^{2}]
		&=&	 E\left[(Y^{H}_{S^{\lambda,\alpha}_t}-	Y^{H}_{S^{\lambda,\alpha}_s})(	Y^{H}_{S^{\lambda,\alpha}_t}-	 Y^{H}_{S^{\lambda,\alpha}_s}) \right]\\
		 &=&E\left[(	Y^{H}_{S^{\lambda,\alpha}_t})^2-	Y^{H}_{S^{\lambda,\alpha}_t}	Y^{H}_{S^{\lambda,\alpha}_s}-	 Y^{H}_{S^{\lambda,\alpha}_t}	Y^{H}_{S^{\lambda,\alpha}_s}+(	Y^{H}_{S^{\lambda,\alpha}_s})^2 \right]\\
		&=&E\left[(	Y^{H}_{S^{\lambda,\alpha}_t})^2-2	Y^{H}_{S^{\lambda,\alpha}_t}	Y^{H}_{S^{\lambda,\alpha}_s}+(	 Y^{H}_{S^{\lambda,\alpha}_s})^2 \right]\\
		&\sim&
		\frac{1}{2}a^2t(\alpha\lambda^{\alpha-1})+ b^2H(\alpha\lambda^{\alpha-1})^{2H}t^{2H}-
		a^2s(\alpha\lambda^{\alpha-1})\\&-& 2b^2Hs(\alpha\lambda^{\alpha-1})^{2H}t^{2H-1}+\frac{1}{2}a^2s(\alpha\lambda^{\alpha-1})+ b^2H(\alpha\lambda^{\alpha-1})^{2H}s^{2H}.
	\end{eqnarray*}
	\end{proof}

Now we discuss the long range dependence behavior of    $Y_{S^{\lambda,\alpha}}^{H}(a, b).$
\begin{definition}
	Note that, a finite variance stationary process $\{X_t,\;t\geq 0\}$ is said to have long range dependence property (Cont and Tankov \cite{Cont}), if $\sum_{k=0}^{\infty}\gamma_k=\infty$, where
	\begin{eqnarray*}
		\gamma_k=Cov(X_k,X_{k+1}).
	\end{eqnarray*}
\end{definition}

	In the following definition we give the equivalent definition for a non-stationary process $\{X_t,\;t\geq 0\}$.
	
\begin{definition}\label{d1}
	Let $s>0$ be fixed and $t>s$. Then process $\{X_t,\;t\geq 0\}$ is said to have long range dependence property property if
	\begin{eqnarray*}
		Corr(X_t, X_s)\sim c(s)t^{-d}, \ \ as \ \ t\rightarrow\infty,
	\end{eqnarray*}	
	where $c(s)$ is a constant depending on $s$ and $d\in(0,1)$.
\end{definition}

\begin{theorem} Let $ N^H(a, b)=\{N^{H}_{t}(a,b), \; t\geq 0\}$  be the mfBm of parameters $a,  b $ and $  H$. Let   $S^{\lambda,\alpha}=\{S^{\lambda,\alpha}_{t},\; t\geq 0\}$ be the TSS with index $\alpha\in(0, 1)$ and tempering parameter $\lambda>0$. Then  the time-changed mixed fractional Brownian motion by means of   $S^{\lambda,\alpha}$  has long range dependence  property for every $H>\frac{1}{2}$.
	\end{theorem}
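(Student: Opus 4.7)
The plan is to apply Definition \ref{d1} directly: compute the asymptotic behavior of the correlation $Corr(Y^{H}_{S^{\lambda,\alpha}_t},Y^{H}_{S^{\lambda,\alpha}_s})$ for fixed $s>0$ as $t\to\infty$ using formula (\ref{qqq1}), and verify a decay of the form $c(s)t^{-d}$ with $d\in(0,1)$. The numerator is already supplied by Proposition \ref{pr11}; what remains is to obtain sharp asymptotics for the variance factor in the denominator.

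For the variance at time $t$, I would condition on $S^{\lambda,\alpha}_{t}$ and use independence of $B$, $B^{H}$, and the subordinator to write
\begin{eqnarray*}
Var(Y^{H}_{S^{\lambda,\alpha}_{t}}) = a^{2}\,E[S^{\lambda,\alpha}_{t}] + b^{2}\,E[(S^{\lambda,\alpha}_{t})^{2H}].
\end{eqnarray*}
Applying Lemma \ref{ll1} with $q=1$ and $q=2H$ respectively gives
\begin{eqnarray*}
Var(Y^{H}_{S^{\lambda,\alpha}_{t}}) \sim a^{2}(\alpha\lambda^{\alpha-1})\,t + b^{2}(\alpha\lambda^{\alpha-1})^{2H}\,t^{2H}, \qquad t\to\infty.
\end{eqnarray*}
Since $H>\tfrac{1}{2}$ forces $2H>1$, the fractional contribution dominates, so $\sqrt{Var(Y^{H}_{S^{\lambda,\alpha}_{t}})}\sim |b|(\alpha\lambda^{\alpha-1})^{H}\,t^{H}$, while $Var(Y^{H}_{S^{\lambda,\alpha}_{s}})$ is a strictly positive constant $\kappa(s)$.

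Substituting into (\ref{qqq1}) together with (\ref{eq11}), the constant term $\tfrac{1}{2}a^{2}s(\alpha\lambda^{\alpha-1})$ in the covariance is negligible relative to $b^{2}Hs(\alpha\lambda^{\alpha-1})^{2H}t^{2H-1}$ (again because $2H-1>0$), so dividing the leading covariance term by $|b|(\alpha\lambda^{\alpha-1})^{H}t^{H}\sqrt{\kappa(s)}$ yields
\begin{eqnarray*}
Corr(Y^{H}_{S^{\lambda,\alpha}_{t}},Y^{H}_{S^{\lambda,\alpha}_{s}}) \sim \frac{|b|Hs(\alpha\lambda^{\alpha-1})^{H}}{\sqrt{\kappa(s)}}\,t^{-(1-H)}, \qquad t\to\infty.
\end{eqnarray*}
Setting $c(s)=|b|Hs(\alpha\lambda^{\alpha-1})^{H}/\sqrt{\kappa(s)}$ and $d=1-H$, the hypothesis $H\in(\tfrac{1}{2},1)$ gives $d\in(0,\tfrac{1}{2})\subset(0,1)$, exactly as required by Definition \ref{d1}.

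I do not anticipate a genuine obstacle. The only point requiring care is the book-keeping of which terms are subleading: one must verify that the $t^{2H}$ term in the variance overwhelms the linear Brownian piece, and that the constant term in the covariance is subleading compared to $t^{2H-1}$. Both reductions hinge solely on $2H>1$, so the restriction $H>\tfrac{1}{2}$ enters the argument purely to guarantee that the fractional component sets the asymptotic scale on both sides of the ratio in (\ref{qqq1}).
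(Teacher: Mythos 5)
Your proof is correct and follows essentially the same route as the paper: Definition \ref{d1} applied to the ratio of the covariance asymptotics from Proposition \ref{pr11} and the variance asymptotics obtained from Lemma \ref{ll1}. The two minor differences are both in your favor: you compute $Var(Y^{H}_{S^{\lambda,\alpha}_{t}})=a^{2}E[S^{\lambda,\alpha}_{t}]+b^{2}E[(S^{\lambda,\alpha}_{t})^{2H}]$ directly by conditioning, whereas the paper in effect substitutes $s=t$ into the fixed-$s$ asymptotic \eqref{eq11} and so carries the spurious constants $\tfrac{1}{2}$ and $H$ in the denominator (irrelevant to the power-law exponent, but your constants are the correct ones); and you isolate the single dominant power $t^{-(1-H)}$, which matches the form $c(s)t^{-d}$ demanded by Definition \ref{d1} more literally than the paper's statement that the correlation decays like the mixture $t^{-H}+t^{-(1-H)}$.
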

	\begin{proof}
The process $Y_{S^{\lambda,\alpha}}^{H}(a, b)$ is not stationary, hence the Definition \ref{d1}  will be used to establish the long range dependence property.\\
	Let $\frac{1}{2}<H<1$.  Using Eqs. \eqref{qqq1}, \eqref{eq11} and by  Taylor's expansion we get, as $t\rightarrow\infty$
	\begin{eqnarray*}	
Corr(Y^{H}_{S^{\lambda,\alpha}_t},	Y^{H}_{S^{\lambda,\alpha}_s})
&\sim& \frac{	\frac{1}{2}a^2s(\alpha\lambda^{\alpha-1})+ b^2Hs(\alpha\lambda^{\alpha-1})^{2H}t^{2H-1}}
		{\sqrt{\left(\frac{1}{2}a^2(\alpha\lambda^{\alpha-1})t+ b^2H(\alpha\lambda^{\alpha-1})^{2H}t^{2H}\right)}\sqrt{	 E(	 Y^{H}_{S^{\lambda,\alpha}_s})^{2}}}\\&=&
\frac{\frac{1}{2}a^2s(\alpha\lambda^{\alpha-1})+b^2 Hs(\alpha\lambda^{\alpha-1})^{2H}t^{2H-1}}{\sqrt{b^2H(\alpha\lambda^{\alpha-1})^{2H}t^{2H}\left[
	\frac{a^2}{2b^2H}(\alpha\lambda^{\alpha-1})^{1-2H}t^{1-2H}+ 1\right] }\sqrt{	E(	Y^{H}_{S^{\lambda,\alpha}_s})^{2}}}\\
&\sim&\frac{	\frac{a^2}{2}H^{-\frac{1}{2}}s(\alpha\lambda^{\alpha-1})^{1-H}}{|b|\sqrt{	 E(	 Y^{H}_{S^{\lambda,\alpha}_s})^{2}}}t^{-H}
		+\frac{	 |b|H^{\frac{1}{2}}s(\alpha\lambda^{\alpha-1})^{H}}{\sqrt{	E(	Y^{H}_{S^{\lambda,\alpha}_s})^{2}}}t^{H-1}.		
	\end{eqnarray*}
Then the correlation function of $	Y^{H}_{S^{\lambda,\alpha}_t}$ decays like a mixture of power law $t^{-H}+t^{-(1-H)}$ and the  time-changed process $Y^{H}_{S^{\lambda,\alpha}}(a, b)$ exhibits  long range dependence  property for all $H>\frac{1}{2}$.
	\end{proof}
\begin{figure}[h!]
    \centering
    \includegraphics[width=0.9\linewidth]{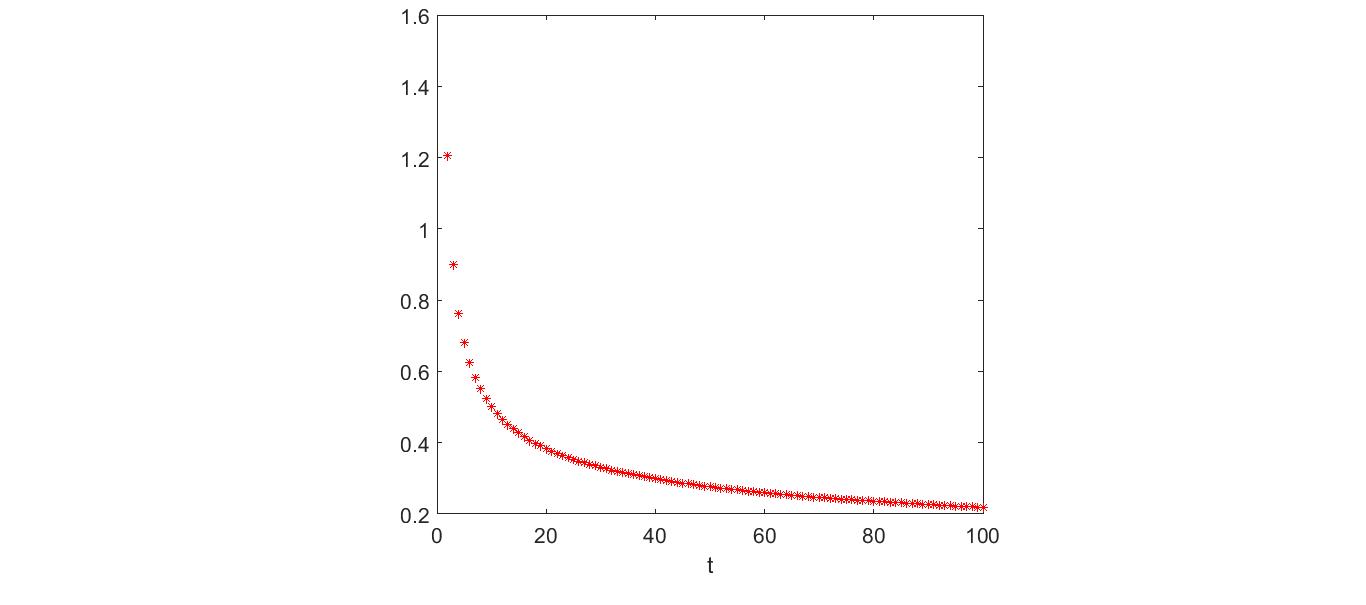}
    \caption{ The correlation function of mixed fractional Brownian motion time-changed by (TSS)  for $s=1$, $a=b=1$, $\lambda=0.1$, $\alpha=0.5$ and $H=0.7$.}
    \label{}
\end{figure}

\begin{remark}
	When $a=0$ and $b=1$ in Eqs. \eqref{eq11} and \eqref{qqq1}, we obtain
	\begin{eqnarray*}
				&&Cov(	Y^{H}_{S^{\lambda,\alpha}_t},	Y^{H}_{S^{\lambda,\alpha}_s})
				\sim Hs(\alpha\lambda^{\alpha-1})^{2H}t^{2H-1},\quad as \;  t\rightarrow\infty,
			\\&&	Corr(	Y^{H}_{S^{\lambda,\alpha}_t},	Y^{H}_{S^{\lambda,\alpha}_s})
				\sim Hs^{1-H}t^{H-1},\quad as \;  t\rightarrow\infty.
			\end{eqnarray*}
\end{remark}
Hence we obtain the result proved in \cite{KGW}
\begin{corollary}
The fractional Brownian motion time-changed by TSS  has long range dependence  property for every $H\in(0,1)$.
\end{corollary}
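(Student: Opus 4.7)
The plan is to deduce this corollary directly from the preceding remark, which is precisely the $a=0$, $b=1$ specialization of Proposition \ref{pr11} and the correlation computation in the proof of the preceding theorem, and then to check that the resulting asymptotics match the form required by Definition \ref{d1}.

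First I would observe that setting $a=0$ and $b=1$ in the definition of $Y^{H}_{S^{\lambda,\alpha}_t}$ reduces the time-changed mfBm to the pure time-changed fractional Brownian motion $B^{H}_{S^{\lambda,\alpha}_t}$, so the corollary is exactly the long range dependence statement for this subordinated fBm. The remark immediately above the corollary has already recorded the asymptotic behavior of both $\mathrm{Cov}$ and $\mathrm{Corr}$ in this special case, giving
\begin{equation*}
\mathrm{Corr}\bigl(Y^{H}_{S^{\lambda,\alpha}_t},Y^{H}_{S^{\lambda,\alpha}_s}\bigr)\sim H s^{1-H} t^{H-1},\qquad t\to\infty.
\end{equation*}

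Next I would match this to Definition \ref{d1} by setting $c(s):=H s^{1-H}$, which depends only on $s$ once $H$ is fixed, and $d:=1-H$. For every $H\in(0,1)$ the exponent $d=1-H$ lies in the open interval $(0,1)$, so the hypotheses of Definition \ref{d1} are verified and long range dependence follows for the entire admissible range of $H$.

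The only point worth emphasizing, rather than a real obstacle, is the contrast with the mixed case treated in the preceding theorem. There the Brownian component contributed an additional term behaving like $t^{-H}$, and balancing the two exponents $-H$ and $H-1$ forced the restriction $H>\tfrac{1}{2}$ in order to obtain a single dominant power law with $d\in(0,1)$. Once the Brownian component is removed by taking $a=0$, only the $t^{H-1}$ term survives, its exponent automatically belongs to $(-1,0)$ for all $H\in(0,1)$, and the long range dependence conclusion extends to the full parameter range without restriction, recovering the result of \cite{KGW}.
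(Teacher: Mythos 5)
Your proposal is correct and follows essentially the same route as the paper: the paper's own justification of this corollary is precisely the preceding remark's specialization $a=0$, $b=1$, giving $Corr(Y^{H}_{S^{\lambda,\alpha}_t},Y^{H}_{S^{\lambda,\alpha}_s})\sim Hs^{1-H}t^{H-1}$, matched against Definition \ref{d1} with $d=1-H\in(0,1)$. Your closing observation about why removing the Brownian component lifts the restriction $H>\frac{1}{2}$ is accurate and consistent with the paper's intent.
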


\section{MfBm time-changed by the gamma subordinator}
\begin{definition}
	Gamma process $\Gamma=\{\Gamma_t, t\geq\}$ is a Stationary independent increments process with gamma distribution. More precisely, the increment $\Gamma_{t+s}-\Gamma_s$ have density function
	\begin{eqnarray*}
		f(x,t)=\dfrac{1}{\Gamma(t/\nu)}x^{(t/\nu)-1}e^{-x}, \quad  x>0, \quad  \nu>0.
	\end{eqnarray*}
More detail about gamma subordinator can be founded in \cite{KWPS}.
\end{definition}
\begin{lemma} (see \cite{KWPS} for the proof)\label{ll2}\\
For $q>0,$ the asymptotic behavior of q-th order moments of   $\Gamma_{t}$ satisfies
\begin{eqnarray*}
E(\Gamma_{t})^{q}\sim \left(\frac{t}{\nu}\right)^{q}, \quad as \; t\rightarrow \infty.
\end{eqnarray*}
\end{lemma}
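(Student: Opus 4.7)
The plan is to compute the $q$-th moment of $\Gamma_t$ exactly from its density, reduce it to a ratio of two gamma functions, and then invoke the classical Stirling-type asymptotic for such ratios. Since everything in the lemma follows from an explicit integral identity plus a well-known property of the gamma function, the argument should be short and essentially mechanical.

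First, using the density $f(x,t) = \frac{1}{\Gamma(t/\nu)}\, x^{(t/\nu)-1} e^{-x}$ for $x>0$, I would write
\begin{eqnarray*}
E[\Gamma_t^q] = \int_0^{\infty} x^q\, \frac{1}{\Gamma(t/\nu)}\, x^{(t/\nu)-1} e^{-x}\, dx
= \frac{1}{\Gamma(t/\nu)} \int_0^{\infty} x^{(t/\nu)+q-1} e^{-x}\, dx
= \frac{\Gamma(t/\nu + q)}{\Gamma(t/\nu)},
\end{eqnarray*}
valid for every $q>0$ since $t/\nu+q>0$ and the integral defining $\Gamma(\cdot)$ converges. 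At this stage the problem is reduced to understanding the ratio $\Gamma(z+q)/\Gamma(z)$ as $z=t/\nu\to\infty$.

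Next, I would quote (or briefly derive from Stirling's formula) the standard asymptotic
\begin{eqnarray*}
\frac{\Gamma(z+q)}{\Gamma(z)} \sim z^q, \qquad z \to \infty,
\end{eqnarray*}
which holds for any fixed real $q>0$. Substituting $z = t/\nu$ then gives $E[\Gamma_t^q] \sim (t/\nu)^q$ as $t \to \infty$, which is exactly the claim.

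There is no real obstacle in this proof: the only technical point worth spelling out is the gamma-ratio asymptotic, and even that is classical. If one prefers to avoid citing it, one can derive it directly by writing $\log \Gamma(z+q) - \log \Gamma(z) = q \log z + o(1)$ from Stirling's expansion, which is a one-line computation. The rest is just bookkeeping inside the integral for the $q$-th moment.
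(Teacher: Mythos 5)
Your proof is correct and is essentially the standard argument: the paper itself gives no proof, deferring to the cited reference, where the same computation $E[\Gamma_t^q]=\Gamma(t/\nu+q)/\Gamma(t/\nu)$ followed by the Stirling-type ratio asymptotic $\Gamma(z+q)/\Gamma(z)\sim z^q$ is used. Nothing is missing.
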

\begin{definition}
	Let $ N^H(a, b)=\{N^{H}_{t}(a,b), \; t\geq 0\}$  be a mfBm and let  $\Gamma=\{\Gamma_{t},\; t\geq 0\}$ be a gamma subordinator.  The time-changed process of $N^H(a, b)$ by means of $\Gamma$ is the process  $Y_{\Gamma}^{H}(a, b) =\{Y^{H}_{{\Gamma}_t}, \; t\geq 0\}$ defined by:
	\begin{eqnarray}
	Y^{H}_{{\Gamma}_t}=N^{H}_{\Gamma_{t}}(a,b)=aB_{\Gamma_{t}}+bB^H_{\Gamma_{t}}, \quad (a,b) \in \mathbb{R}\times \mathbb{R}\backslash\{0\},
	\end{eqnarray}
	where the subordinator $\Gamma_{t}$ is assumed to be independent of both the Bm and the fBm.
\end{definition}

\begin{proposition}\label{pr1}
Let $Y_{\Gamma}^{H}(a, b)$  be the mfBm time-changed by $\Gamma$.  Then we have
	\begin{enumerate}
		\item For $s<t$, the covariance function for the process $Y^{H}_{\Gamma_t}$ follows
	\begin{eqnarray*}
	Cov(Y^{H}_{{\Gamma}_t},Y^{H}_{{\Gamma}_s})&=&
\frac{a^2}{2}\left[\frac{\Gamma(1+t/\nu)}{\Gamma(t/\nu)}+\frac{\Gamma(1+s/\nu)}{\Gamma(s/\nu)}-\frac{\Gamma(1+(t-s)/\nu)}{\Gamma((t-s)/\nu)}\right]\nonumber\\
&+&\frac{b^2}{2}\left[\frac{\Gamma(2H+t/\nu)}{\Gamma(t/\nu)}+\frac{\Gamma(2H+s/\nu)}{\Gamma(s/\nu)}-\frac{\Gamma(2H+(t-s)/\nu)}{\Gamma((t-s)/\nu)}\right].\label{q1234}\nonumber
			\end{eqnarray*}
		\item For fixed $s$ and large $t$, the process $Y^{H}_{{\Gamma}_t}$ follows	
		\begin{eqnarray}
		Cov(Y^{H}_{{\Gamma}_t},Y^{H}_{{\Gamma}_s})\sim
		a^2\frac{s}{\nu}+ \frac{2b^2Hs}{\nu^{2H}}t^{2H-1}.	\label{qqq17}
		\end{eqnarray}
	\end{enumerate}
	\end{proposition}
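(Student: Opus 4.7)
The plan is to mirror Proposition 2.1 step-for-step, substituting the gamma subordinator $\Gamma$ for $S^{\lambda,\alpha}$ throughout. I would begin from the polarization identity
\begin{equation*}
\mathrm{Cov}(Y^{H}_{\Gamma_t},Y^{H}_{\Gamma_s}) = \tfrac{1}{2}\bigl[E(Y^{H}_{\Gamma_t})^{2}+E(Y^{H}_{\Gamma_s})^{2}-E(Y^{H}_{\Gamma_t}-Y^{H}_{\Gamma_s})^{2}\bigr],
\end{equation*}
expand $Y^{H}_{\Gamma_u}=aB_{\Gamma_u}+bB^{H}_{\Gamma_u}$, and use the independence of $B$, $B^{H}$ and $\Gamma$ to annihilate the cross terms $E[B_{\Gamma_u}B^{H}_{\Gamma_u}]$, leaving only separate $a^{2}$ and $b^{2}$ contributions.

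Conditioning on $\Gamma$ and recalling that $B(1)$ and $B^{H}(1)$ both have unit variance gives $E(B_{\Gamma_u})^{2}=E[\Gamma_u]$ and $E(B^{H}_{\Gamma_u})^{2}=E[\Gamma_u^{2H}]$. For the increment, the stationarity in distribution of the increments of $B$, $B^{H}$ and of the L\'evy subordinator $\Gamma$ yields
\begin{equation*}
E\bigl[(B^{H}_{\Gamma_t}-B^{H}_{\Gamma_s})^{2}\bigr]=E[(\Gamma_t-\Gamma_s)^{2H}]=E[\Gamma_{t-s}^{2H}],
\end{equation*}
and analogously for $B$. A direct integration against the given gamma density (or the standard Gamma-function identity) yields $E[\Gamma_t^{q}]=\Gamma(q+t/\nu)/\Gamma(t/\nu)$ for every $q>0$. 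Substituting $q=1$ and $q=2H$ into the polarization identity produces the closed form claimed in part (1).

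For part (2), I would apply Lemma \ref{ll2} to each of the six gamma-ratio moments to obtain $\Gamma(q+t/\nu)/\Gamma(t/\nu)\sim(t/\nu)^{q}$ as $t\to\infty$. Using $\Gamma(1+x)=x\,\Gamma(x)$ and the telescoping $t+s-(t-s)=2s$, the $a^{2}$ contribution in fact collapses exactly (not merely asymptotically) to $a^{2}s/\nu$. For the $b^{2}$ contribution, a Taylor expansion $(t-s)^{2H}=t^{2H}(1-2Hs/t+O(t^{-2}))$ delivers $t^{2H}-(t-s)^{2H}\sim 2Hs\,t^{2H-1}$, while the constant $(s/\nu)^{2H}$ remainder is absorbed as a lower-order term. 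Collecting the $b^{2}/2$ prefactor then yields the leading $t$-dependent contribution of order $t^{2H-1}$ with coefficient proportional to $b^{2}Hs\,\nu^{-2H}$, matching \eqref{qqq17}.

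The only genuinely delicate point is the increment-law identification for $B^{H}$, whose increments are not independent: one must use stationarity in the distributional sense on the event $\{\Gamma_s=v,\Gamma_t=u\}$ and then average against the joint law of $(\Gamma_s,\Gamma_t)$ via the independent-and-stationary-increment property of $\Gamma$. Beyond that, everything is a mechanical transcription of the TSS calculation, with Lemma \ref{ll2} taking the role of Lemma \ref{ll1} and the closed-form gamma moments replacing the TSS asymptotics.
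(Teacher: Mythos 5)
Your part (1) follows the paper's route exactly: polarization, independence of $B$ and $B^{H}$ to kill the cross terms, conditioning on the subordinator together with stationarity of the increments of $B$, $B^{H}$ and $\Gamma$, and the exact identity $E[\Gamma_t^{q}]=\Gamma(q+t/\nu)/\Gamma(t/\nu)$. That part is fine, including your (correct) remark that the increment step for $B^{H}$ must be done conditionally on $(\Gamma_s,\Gamma_t)$ since fBm increments are only stationary, not independent.

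In part (2) there is a genuine gap. You propose to ``apply Lemma \ref{ll2} to each of the six gamma-ratio moments'' and then subtract. But $\frac{\Gamma(2H+t/\nu)}{\Gamma(t/\nu)}\sim(t/\nu)^{2H}$ only pins each term down up to an error of order $t^{2H-1}$ (indeed $\Gamma(x+2H)/\Gamma(x)=x^{2H}\bigl(1+H(2H-1)x^{-1}+O(x^{-2})\bigr)$), and the difference you need, $\frac{\Gamma(2H+t/\nu)}{\Gamma(t/\nu)}-\frac{\Gamma(2H+(t-s)/\nu)}{\Gamma((t-s)/\nu)}$, is itself only of order $t^{2H-1}$ because the leading $t^{2H}$ parts cancel. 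Asymptotic equivalence is not preserved under subtraction when the leading terms cancel, so the step as written is not justified; it happens to give the right order only because the two $O(t^{2H-1})$ correction terms also cancel to first order, which you would have to check. This is precisely why the paper (following the cited reference) instead expands the \emph{ratios} $g((t-s)/\nu)/g(t/\nu)$ and $f((t-s)/\nu)/f(t/\nu)$ to second order, as in \eqref{qqq123}--\eqref{qqqq1234}, so that the near-cancellation is controlled before any asymptotic replacement is made. Separately, your hedge ``coefficient proportional to $b^{2}Hs\,\nu^{-2H}$'' does not actually verify the constant in \eqref{qqq17}: carrying out either your computation or the paper's carefully, $\frac{b^{2}}{2}\cdot 2Hs\,\nu^{-2H}t^{2H-1}=b^{2}Hs\,\nu^{-2H}t^{2H-1}$, which is half of the coefficient displayed in \eqref{qqq17}; a proof of the proposition as stated would have to confront that discrepancy rather than leave the constant unspecified.
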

	\begin{proof}
	\begin{enumerate}
		\item Let  $s<t$. Using similar procedure as the proof of Eq. \eqref{qqq11}, we get
		\begin{eqnarray*}
			Cov(Y^{H}_{{\Gamma}_t},Y^{H}_{{\Gamma}_s})&=&E(Y^{H}_{{\Gamma}_t}Y^{H}_{{\Gamma}_s})\nonumber\\
			&=&\frac{a^2}{2}\left[E(B_{\Gamma_{t}})^2+E(B_{\Gamma_{s}})^2-E(B_{\Gamma_{t-s}})^2 \right]
			+\frac{b^2}{2}\left[E(B^H_{\Gamma_{t}})^2+E(B^H_{\Gamma_{s}})^2-E(B^H_{\Gamma_{t-s}})^2 \right]\\
			 &=&\frac{a^2}{2}\left[\frac{\Gamma(1+t/\nu)}{\Gamma(t/\nu)}+\frac{\Gamma(1+s/\nu)}{\Gamma(s/\nu)}-\frac{\Gamma(1+(t-s)/\nu)}{\Gamma((t-s)/\nu)}\right]\nonumber\\
&+&\frac{b^2}{2}\left[\frac{\Gamma(2H+t/\nu)}{\Gamma(t/\nu)}+\frac{\Gamma(2H+s/\nu)}{\Gamma(s/\nu)}-\frac{\Gamma(2H+(t-s)/\nu)}{\Gamma((t-s)/\nu)}\right].
		\end{eqnarray*}
		
		\item Let $g(x)=\Gamma(x+2H)/\Gamma(x)$ and $f(x)=\Gamma(x+1)/\Gamma(x)$. By Taylor expansion and \cite{KWPS} we have
		\begin{eqnarray}
		\frac{g(x+h)}{g(x)}=1+2H(h/x)+H(2H-1)(h/x)^2+O(x^{-3}),\label{qqq123}
		\end{eqnarray}
		and
		\begin{eqnarray}
		\frac{f(x+h)}{f(x)}=1+(h/x)+O(x^{-2}).\label{qqqq1234}
		\end{eqnarray}
		For fixed $s$ and large $t$, using Eqs. \eqref{qqq17}, \eqref{qqq123} and \eqref{qqqq1234}, $Y^H_{\Gamma_t}$ follows
		\begin{eqnarray*}
			 Cov(Y^{H}_{{\Gamma}_t},Y^{H}_{{\Gamma}_s})&=&\frac{a^2}{2}f(t/\nu)\left[1+\frac{f(s/\nu)}{f(t/\nu)}-\frac{f((t-s)/\nu)}{f(t/\nu)}\right]
+\frac{b^2}{2}g(t/\nu)\left[1+\frac{g(s/\nu)}{g(t/\nu)}-\frac{g((t-s)/\nu)}{g(t/\nu)}\right]\\
&=&\frac{a^2}{2}(t/\nu)\left[1+\frac{f(s/\nu)}{f(t/\nu)}-\left(1-\frac{s}{t}+O(t^{-2})\right)\right]\\
&+&\frac{b^2}{2}(t/\nu)^{2H}\bigg[1+\frac{g(s/\nu)}{g(t/\nu)}-\bigg(1-2H\left(\frac{s}{t}\right)+H(2H-1)\left(\frac{s^2}{t^2}\right)+O(t^{-3})\bigg)\bigg] \\
		&\sim&
		a^2\frac{s}{\nu}+ \frac{2b^2Hs}{\nu^{2H}}t^{2H-1}.		
		\end{eqnarray*}

	\end{enumerate}
	\end{proof}

\begin{proposition}\label{p1}
Let $ N^H(a, b)=\{N^{H}_{t}(a,b), \; t\geq 0\}$  be the mfBm and let  $\Gamma=\{\Gamma_{t},\; t\geq 0\}$ be a gamma subordinator. Let  $Y_{\Gamma}^{H}(a, b) =\{Y^{H}_{\Gamma_t}, \; t\geq 0\}$ be the mfBm time-changed by means of    $\Gamma.$ Then for fixed $s>0$ and $t\rightarrow \infty$, we have
\begin{enumerate}
\item	
\begin{eqnarray*}
		E[(Y^{H}_{\Gamma_t}-Y^{H}_{\Gamma_s})^{2}]\sim	a^2\frac{t}{\nu}+ \frac{2b^2H}{\nu^{2H}}t^{2H}-	2a^2\frac{s}{\nu}- \frac{4b^2Hs}{\nu^{2H}}t^{2H-1}+	 a^2\frac{s}{\nu}+ \frac{2b^2H}{\nu^{2H}}s^{2H}.
	\end{eqnarray*}
  \item For $\frac{1}{2}<H<1$. The correlation function is given by \begin{eqnarray}		 Corr(Y^{H}_{\Gamma_t},Y^{H}_{\Gamma_s})\sim\frac{a^2(2H)^{-\frac{1}{2}}s}{\nu^{1-H}|b|\sqrt{E(Y^{H}_{\Gamma_s})^{2}}}t^{-H}
	+\frac{|b|(2H)^{\frac{1}{2}}s}{\nu^{H}\sqrt{E(Y^{H}_{\Gamma_s})^{2}}}t^{H-1}. \label{qqq10}
	\end{eqnarray}
  \end{enumerate}
	\end{proposition}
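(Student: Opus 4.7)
The plan is to reduce both parts to the asymptotic covariance of Proposition~\ref{pr1}(2) together with the asymptotic second moments of $Y^{H}_{\Gamma_t}$ that come from Lemma~\ref{ll2}. Since $B$ and $B^{H}$ are centered and independent of the gamma subordinator $\Gamma$, the process $Y^{H}_{\Gamma_t}=aB_{\Gamma_t}+bB^{H}_{\Gamma_t}$ is itself centered, so its variance coincides with $E[(Y^{H}_{\Gamma_t})^{2}]$ and the cross second moment with its covariance. This puts us in a setting exactly parallel to the TSS case treated in Propositions~\ref{pr11} and~\ref{p123}, and the arguments can be copied with the obvious substitutions.

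For part~(1), I would start from the elementary identity
\begin{eqnarray*}
E[(Y^{H}_{\Gamma_t}-Y^{H}_{\Gamma_s})^{2}] &=& E[(Y^{H}_{\Gamma_t})^{2}] + E[(Y^{H}_{\Gamma_s})^{2}] - 2\,Cov(Y^{H}_{\Gamma_t},Y^{H}_{\Gamma_s}).
\end{eqnarray*}
Conditioning on $\Gamma_t$ and using the independence of $B$ and $B^{H}$ gives $E[(Y^{H}_{\Gamma_t})^{2}\mid\Gamma_t]=a^{2}\Gamma_t+b^{2}\Gamma_t^{2H}$; taking expectations and then applying Lemma~\ref{ll2} to each term produces the large-$t$ asymptotic $a^{2}t/\nu+(2Hb^{2}/\nu^{2H})t^{2H}$ consistent with the coefficients already used in Proposition~\ref{pr1}(2). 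Substituting this expression, its analogue at $s$, and the covariance asymptotic from Proposition~\ref{pr1}(2) into the identity above yields the six-term sum exactly as stated.

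For part~(2), I would apply the definition $Corr(Y^{H}_{\Gamma_t},Y^{H}_{\Gamma_s})=Cov(Y^{H}_{\Gamma_t},Y^{H}_{\Gamma_s})/\sqrt{Var(Y^{H}_{\Gamma_t})\,Var(Y^{H}_{\Gamma_s})}$, inserting the covariance asymptotic from Proposition~\ref{pr1}(2) in the numerator. Because $H>1/2$ forces $t^{2H}\gg t$, the variance of $Y^{H}_{\Gamma_t}$ is dominated by its fractional-Brownian component, and factoring the leading term out gives
\begin{eqnarray*}
\sqrt{Var(Y^{H}_{\Gamma_t})} &\sim& |b|\sqrt{2H}\,\nu^{-H}\,t^{H}\,\sqrt{1+\frac{a^{2}\nu^{2H-1}}{2Hb^{2}}\,t^{1-2H}},
\end{eqnarray*}
with the second square root tending to $1$ as $t\to\infty$. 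Dividing each of the two summands of the numerator by this denominator and by the constant $\sqrt{E[(Y^{H}_{\Gamma_s})^{2}]}$ converts $a^{2}s/\nu$ into the $t^{-H}$ term and $(2b^{2}Hs/\nu^{2H})t^{2H-1}$ into the $t^{H-1}$ term, with the $(2H)^{\pm 1/2}$ constants emerging from $\sqrt{2H}$ in the denominator.

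The main obstacle is purely bookkeeping: the denominator must be factored so that \emph{both} summands in the numerator survive the division, because if one only inserts the leading-order denominator naively, the $a^{2}s/\nu$ piece loses its $t^{-H}$ scaling and the mixed power-law behaviour $t^{-H}+t^{H-1}$ — the feature that drives the long-range dependence conclusion in the theorem that follows — would be hidden. Once the factorisation above is in place, matching the $(2H)^{\pm 1/2}$ coefficients is a routine algebraic simplification that mirrors the TSS computation of the preceding theorem.
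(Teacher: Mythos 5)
Your approach is the same as the paper's: both parts are obtained by expanding $E[(Y^{H}_{\Gamma_t}-Y^{H}_{\Gamma_s})^{2}]$ and $Corr(Y^{H}_{\Gamma_t},Y^{H}_{\Gamma_s})$ in terms of the covariance asymptotic \eqref{qqq17} and the second moments $E[(Y^{H}_{\Gamma_t})^{2}]$, then factoring the dominant $t^{2H}$ term out of the variance so that both power laws $t^{-H}$ and $t^{H-1}$ survive. Your write-up is actually more explicit than the paper's, which simply writes ``$\sim$'' and substitutes.

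There is, however, one point where your argument is internally inconsistent, and it is worth flagging because it also exposes a slip in the paper. Your conditioning step gives
$E[(Y^{H}_{\Gamma_t})^{2}]=a^{2}E[\Gamma_t]+b^{2}E[\Gamma_t^{2H}]$, and Lemma \ref{ll2} then yields
$E[(Y^{H}_{\Gamma_t})^{2}]\sim a^{2}t/\nu+b^{2}(t/\nu)^{2H}$ --- there is no factor $2H$ in front of $b^{2}$. You nevertheless assert that this computation ``produces'' $a^{2}t/\nu+(2Hb^{2}/\nu^{2H})t^{2H}$, which is what the stated formulas (and the paper's proof) use; that coefficient does not follow from the derivation you describe. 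The paper obtains its $2H$ by in effect setting $s=t$ in the fixed-$s$, large-$t$ asymptotic \eqref{qqq17}, which is not a legitimate substitution. The discrepancy only affects multiplicative constants (it changes $2Hb^{2}$ to $b^{2}$ in part (1) and $(2H)^{\pm 1/2}$ to other constants in part (2)); the exponents $t^{-H}$ and $t^{H-1}$, and hence the long range dependence conclusion, are unaffected. But as written you cannot both carry out the conditioning computation honestly and land on the constants in the statement; you should either correct the constants or note explicitly that you are adopting the paper's normalization of the variance.
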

	\begin{proof}
	Let $s>0$ be fixed and large $t$. Then
\begin{enumerate}
\item Using Eq. \eqref{qqq17}, we have
	\begin{eqnarray*}
		E[(Y^{H}_{\Gamma_t}-Y^{H}_{\Gamma_s})^{2}]
		&=&E\left[(Y^{H}_{\Gamma_t})^2-2Y^{H}_{\Gamma_t}Y^{H}_{\Gamma_s}+(Y^{H}_{\Gamma_s})^2 \right]\\
		&\sim&
		a^2\frac{t}{\nu}+ \frac{2b^2H}{\nu^{2H}}t^{2H}-	2a^2\frac{s}{\nu}- \frac{4b^2Hs}{\nu^{2H}}t^{2H-1}+	 a^2\frac{s}{\nu}+ \frac{2b^2H}{\nu^{2H}}s^{2H}.
	\end{eqnarray*}
\item Let  $\frac{1}{2}<H<1$. Using Eqs. \eqref{qqq1}, \eqref{qqq17} and by  Taylor's expansion we get, as $t\rightarrow\infty$
	\begin{eqnarray*}
	Corr(Y^{H}_{\Gamma_t},Y^{H}_{\Gamma_s})&\sim&\frac{		a^2\frac{s}{\nu}+ \frac{2b^2Hs}{\nu^{2H}}t^{2H-1}}{\sqrt{(	 a^2\frac{t}{\nu}+ \frac{2b^2H}{\nu^{2H}}t^{2H})}\sqrt{
		E(Y^{H}_{\Gamma_s})^{2}} }\\&=&	 \frac{1}{|b|(2H)^\frac{1}{2}(\frac{t}{\nu})^H\sqrt{\left[1+\frac{a^2}{2b^2H\nu^{1-2H}}t^{1-2H} \right]}}
	\frac{[a^2\frac{s}{\nu}+ \frac{2b^2Hs}{\nu^{2H}}t^{2H-1}]}{\sqrt{
		E(Y^{H}_{\Gamma_s})^{2}}}\\
	&=&\frac{(2H)^{-\frac{1}{2}}(\frac{t}{\nu})^{-H}}{|b|\left[1+\frac{a^2}{2b^2H\nu^{1-2H}}t^{1-2H} \right]^\frac{1}{2}}
	\frac{[a^2\frac{s}{\nu}+ \frac{2b^2Hs}{\nu^{2H}}t^{2H-1}]}{\sqrt{
		E(Y^{H}_{\Gamma_s})^{2}}}\\
		&\sim&\frac{a^2(2H)^{-\frac{1}{2}}s}{\nu^{1-H}|b|\sqrt{
		E(Y^{H}_{\Gamma_s})^{2}}}t^{-H}
	+\frac{|b|(2H)^{\frac{1}{2}}s}{\nu^{H}\sqrt{
		E(Y^{H}_{\Gamma_s})^{2}}}t^{H-1}.
	\end{eqnarray*}
Hence the  correlation function of $Y^{H}_{\Gamma_t}$ decays like a mixture of power law $t^{-H}+t^{-(1-H)}.$
\end{enumerate}
	\end{proof}
	\begin{figure}[h!]
    \centering
    \includegraphics[width=0.9\linewidth]{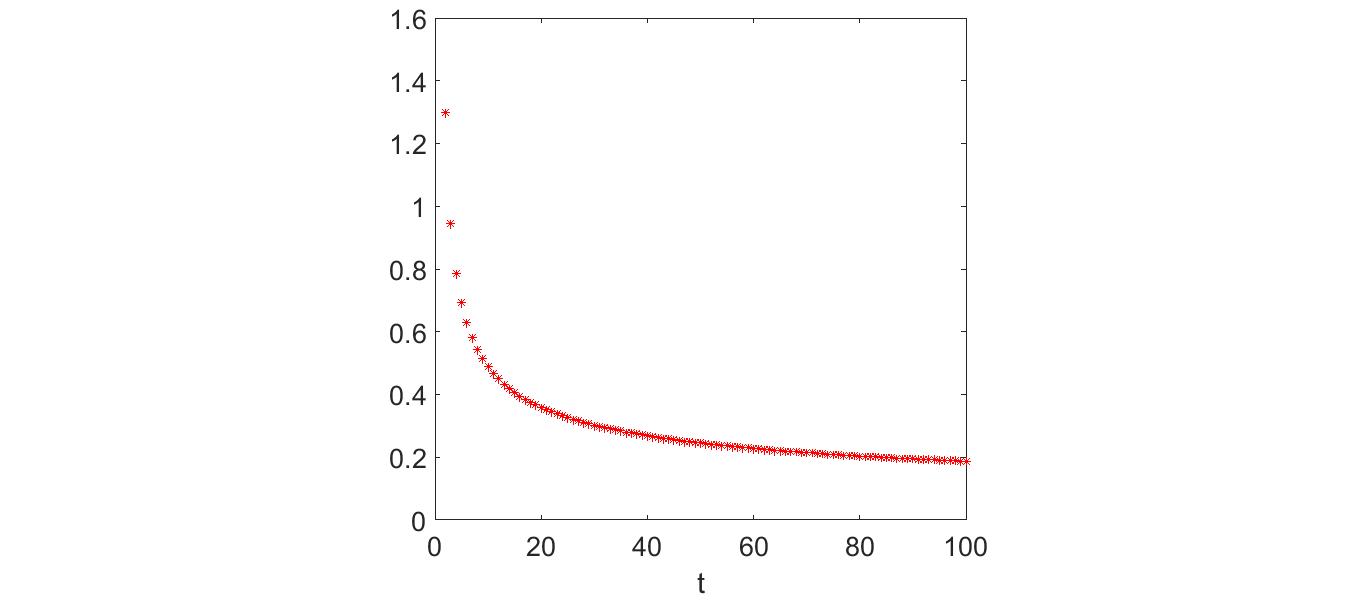}
    \caption{ The correlation function of mixed fractional Brownian motion time-changed by Gamma  for $s=1$, $a=b=1$, $v=0.75$ and $H=0.66$.}
    \label{}
\end{figure}
	
Using Definition  \ref{d1}  and Eq. \eqref{qqq10}  we obtain the  following result

\begin{theorem}
Let $ N^H(a, b)=\{N^{H}_{t}(a,b), \; t\geq 0\}$  be the mfBm of parameters $a,  b $ and $  H$. Let  $\Gamma=\{\Gamma_{t},\; t\geq 0\}$ be a gamma subordinator with parameter $\nu>0$.
Then the time-changed mixed fractional Brownian motion by means of   $\Gamma$ has long range dependence  property for every $H>\frac{1}{2}$.
  	\end{theorem}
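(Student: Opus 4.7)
The plan is to verify the long-range dependence property directly from Definition \ref{d1}, using the asymptotic correlation already computed in Proposition \ref{p1}. Since the time-changed process $Y_\Gamma^H(a,b)$ has non-stationary increments after subordination, the classical summability criterion for stationary sequences is not available, and Definition \ref{d1} is the natural replacement in this setting.

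The body of the argument is essentially a bookkeeping of the two power-law contributions already isolated. Fixing $s>0$, equation \eqref{qqq10} gives, for $H\in(\tfrac12,1)$,
\begin{equation*}
Corr(Y^{H}_{\Gamma_t},Y^{H}_{\Gamma_s})\sim c_1(s)\,t^{-H}+c_2(s)\,t^{-(1-H)},\qquad t\to\infty,
\end{equation*}
with $c_1(s),c_2(s)>0$ the explicit constants read off from \eqref{qqq10}. Since $H>\tfrac12$ forces $1-H<H$, the second term is the slower-decaying one and dominates, hence
\begin{equation*}
Corr(Y^{H}_{\Gamma_t},Y^{H}_{\Gamma_s})\sim c_2(s)\,t^{-d},\qquad d:=1-H\in(0,\tfrac12)\subset(0,1),
\end{equation*}
which is precisely the form required by Definition \ref{d1}, with $c(s)=c_2(s)$. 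This identifies $Y_\Gamma^H(a,b)$ as long-range dependent for every $H>\tfrac12$.

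I expect no genuine obstacle at this last step: all the serious analysis has already been carried out upstream, namely the Taylor expansions of the Gamma-function ratios in \eqref{qqq123}–\eqref{qqqq1234}, the $q$-th moment asymptotics of $\Gamma_t$ from Lemma \ref{ll2}, the resulting covariance estimate \eqref{qqq17}, and the correlation formula \eqref{qqq10} itself. The only point worth stating carefully is the identification of the dominant term: one must not mistakenly pick $t^{-H}$ as the leading order, and one must check that the surviving exponent $d=1-H$ falls in the open interval $(0,1)$, which is exactly what the hypothesis $H\in(\tfrac12,1)$ guarantees.
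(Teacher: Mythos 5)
Your proposal is correct and follows essentially the same route as the paper, which likewise derives the theorem directly by combining the correlation asymptotics \eqref{qqq10} of Proposition \ref{p1} with Definition \ref{d1}. If anything, you are slightly more careful than the paper in explicitly identifying $t^{-(1-H)}$ as the dominant term and checking that $d=1-H$ lies in $(0,1)$, a step the paper leaves implicit.
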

\begin{remark} When $a=0$ and $b=1$ in Eqs. \eqref{qqq17} and \eqref{qqq1},  we get
		\begin{eqnarray*}
				&&Cov(Y^{H}_{\Gamma_t},Y^{H}_{\Gamma_s})
				\sim \frac{2Hs}{\nu^{2H}}t^{2H-1}, \ \ as \ \ t \rightarrow \infty,
				\\&& Corr(Y^{H}_{\Gamma_t},Y^{H}_{\Gamma_s})
				\sim\dfrac{2Hs}{\nu^H\sqrt{E(B_{s}^{\Gamma})^2}}t^{H-1}, \ \ as \ t\rightarrow\infty.
			\end{eqnarray*}
\end{remark}
Hence we obtain the result proved in \cite{KWPS}
\begin{corollary}
The fractional Brownian motion time-changed by gamma subordinator has long range dependence  property for every $H\in(0,1)$.
\end{corollary}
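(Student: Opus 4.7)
The plan is to deduce the corollary directly from the preceding remark, which already records the asymptotic behaviour of the covariance and correlation of $Y^{H}_{\Gamma_t}$ in the degenerate case $a=0$, $b=1$ corresponding to fBm time-changed by $\Gamma$. No new computation is required; the only task is to verify that the decay exponent falls inside the admissible range of Definition \ref{d1}.

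First, I would recall from the remark that, with $a=0$ and $b=1$, one has
\begin{equation*}
\mathrm{Corr}(Y^{H}_{\Gamma_t}, Y^{H}_{\Gamma_s}) \sim \frac{2Hs}{\nu^{H}\sqrt{E(B^{\Gamma}_{s})^{2}}}\, t^{H-1}, \qquad t\to\infty.
\end{equation*}
Thus the correlation is of the form $c(s)\, t^{-d}$ with
\begin{equation*}
c(s) = \frac{2Hs}{\nu^{H}\sqrt{E(B^{\Gamma}_{s})^{2}}}, \qquad d = 1-H.
\end{equation*}
Here the important point is that $c(s)$ depends only on $s$ (and the fixed parameters $H,\nu$), while $d$ is independent of $s$ and $t$.

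Next, I would check the range of $d$. Since $H\in(0,1)$, the exponent $d=1-H$ also lies in $(0,1)$. This is the key improvement over the mfBm theorem: in the pure fBm case the $t^{-H}$ contribution arising from the Brownian component of the mixed process vanishes, and only the $t^{-(1-H)}$ term survives, so we no longer need the restriction $H>\tfrac{1}{2}$ needed to ensure that $1-H$ dominates $H$ in the mixture.

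Finally, I would apply Definition \ref{d1} with the exponent $d=1-H\in(0,1)$ to conclude that $Y^{H}_{\Gamma}=\{Y^{H}_{\Gamma_t},\,t\geq 0\}$, i.e.\ the fBm time-changed by the gamma subordinator, exhibits the long range dependence property for every $H\in(0,1)$. There is no real obstacle here; the entire content is packaged inside the previously established asymptotics, and the corollary is essentially a verification that $1-H\in(0,1)$ whenever $H\in(0,1)$.
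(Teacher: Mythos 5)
Your proposal is correct and follows exactly the route the paper intends: the corollary is stated immediately after the remark giving $\mathrm{Corr}(Y^{H}_{\Gamma_t},Y^{H}_{\Gamma_s})\sim c(s)\,t^{H-1}$ for $a=0$, $b=1$, and the paper (implicitly) concludes via Definition \ref{d1} with $d=1-H\in(0,1)$, just as you do. Your write-up simply makes explicit the verification the paper leaves to the reader, including the correct observation that the $t^{-H}$ term from the Brownian component disappears when $a=0$, which is why the restriction $H>\tfrac{1}{2}$ is no longer needed.
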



\end{document}